\newtheorem{theorem}{Theorem}[section]
\newtheorem{lemma}[theorem]{Lemma}
\newtheorem{definition}[theorem]{Definition}
\newtheorem{claim}[theorem]{Claim}
\newtheorem{remark}[theorem]{Remark}
\newtheorem{conjecture}[theorem]{Conjecture}
\newcommand{\EE}{\ensuremath{\mathbf{E}}}
\newcommand{\Var}{\ensuremath{\text{Var}}}
\newcommand\numberthis{\addtocounter{equation}{1}\tag{\theequation}}
\title{Maker-Breaker Games on Random Hypergraphs}
\author{Maxime Larcher\thanks{%
	Department of Computer Science, ETH Z\"urich, Switzerland%
	\newline%
	larcherm@inf.ethz.ch} 
}
\begin{document}

\maketitle

\abstract{In this paper, we study Maker-Breaker games on the random hypergraph \(H_{n,s,p}\), obtained from the complete \(s\)-graph by keeping every edge independently with probability \(p\). We determine the threshold probability for the property of Maker winning the game as a function of \(s\), the uniformity of the underlying hypergraph, as well as \(m\), \(b\), the number of vertices that Maker and Breaker are respectively allowed to pick each turn.

In addition, we show that depending on those \(m,b,s\), there are two types of thresholds: either being Maker-win is a local property and the threshold is weak, or it is related to global properties of the random hypergraph and the threshold is semi-sharp. We conjecture that in the latter case, the threshold is actually sharp.}

\section{Introduction}

A Maker-Breaker game is a two-player, perfect information game played on a hypergraph \(H = (V,E)\) called the board of the game. The two players, Maker and Breaker, alternately pick vertices of \(V\), until none are left unpicked. Maker wins the game if at the end, the subset of vertices he has picked contains an edge \(e \in E\); otherwise Breaker wins. We let \(m, b\) denote the number of vertices Maker and Breaker are respectively allowed to pick at each turn. A Maker-Breaker game with parameters \((m, b)\) is called \emph{a \((m,b)\)-game}. We generally assume that Maker starts picking vertices.

If Maker (resp.\ Breaker) has a winning strategy, we say that the game is a \emph{Maker-win} (resp.\ a \emph{Breaker-win}), or simply that \emph{Maker wins} (resp.\ \emph{Breaker wins}) on this game. Given some game, a natural question is to ask who of Maker or Breaker wins on it. One of the earliest partial results to this question is the \emph{Erd\"os-Selfridge criterion}~\cite{erdos1973combinatorial}, which was generalised by Beck~\cite{beck1982remarks}.
 
\begin{theorem}[Erd\"os, Selfridge~\cite{erdos1973combinatorial}, Beck~\cite{beck1982remarks}]
	Let \(H = (V,E)\) be a hypergraph and consider the \((m,b)\)-game on this hypergraph in which Maker plays first. Assume that 
		\begin{align*}
			\sum_{e \in E}{(1 + b)^{-|e|/m}} < (1 + b)^{-1}, \label{eq:erdos_selfridge_criterion} \numberthis
		\end{align*} 
	then Breaker has a winning strategy.
\end{theorem}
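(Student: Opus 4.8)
The plan is to give Breaker an explicit greedy strategy governed by a weight function, as in the original Erd\H{o}s--Selfridge argument. Write $q:=1+b$. In a position where Maker owns the vertices $X$ and Breaker owns $Y$, call an edge $e\in E$ \emph{alive} if $e\cap Y=\emptyset$, give it weight $w(e):=q^{-|e\setminus X|/m}$, and set $\Phi:=\sum_{e\text{ alive}}w(e)$. Two things are immediate: initially $\Phi=\sum_{e\in E}q^{-|e|/m}<q^{-1}$ by hypothesis, and if Maker ever occupies a whole edge $e$ then $e$ is alive with $|e\setminus X|=0$, so $\Phi\ge w(e)=1$. Hence it suffices for Breaker to keep $\Phi<1$ throughout the game. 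For an unclaimed vertex $v$ put $d(v):=\sum_{e\ni v,\ e\text{ alive}}w(e)$ (its \emph{danger}), and let $\Sigma$ denote the sum of the $m$ largest values of $d(\cdot)$ over the unclaimed vertices of the current position.

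Breaker's strategy is the obvious greedy one: on each of his turns he picks his $b$ vertices one at a time, every time choosing an unclaimed vertex of maximum current danger (equivalently, deleting a heaviest block of still-alive edges). The heart of the proof is the invariant, established by induction on the number of completed rounds, that \emph{at the start of the game and immediately after every move of Breaker one has $\Phi+\frac{q-1}{m}\Sigma<1$.} The base case is exactly the hypothesis, since $d(v)\le\Phi$ for all $v$ at the start gives $\Sigma\le m\Phi$ and hence $\Phi+\frac{q-1}{m}\Sigma\le q\Phi<1$.

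For the inductive step, suppose the invariant holds and Maker now claims unclaimed vertices $v_1,\dots,v_m$. By convexity of $t\mapsto q^{t/m}$ we have $q^{a/m}\le 1+\frac{a}{m}(q-1)$ for $0\le a\le m$; applying this to each alive edge $e$ with $a_e:=|e\cap\{v_1,\dots,v_m\}|$ and summing shows that Maker's move raises $\Phi$ by at most $\frac{q-1}{m}\sum_e w(e)\,a_e=\frac{q-1}{m}\sum_{i=1}^m d(v_i)\le\frac{q-1}{m}\Sigma$, where the weights and dangers are those of the position before the move. Thus right after Maker's move $\Phi<1$, say $\Phi=\alpha$; in particular Maker has not completed an edge. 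Now let $d^{(1)}\ge d^{(2)}\ge\dots\ge d^{(b+1)}$ be the successive danger functions before each of Breaker's $b$ picks and after the last one (each is pointwise below its predecessor, as deleting edges only lowers dangers): his $k$-th pick removes edges of total weight $\max d^{(k)}$, so afterwards $\Phi=\alpha-\sum_{k=1}^b\max d^{(k)}$. Since deletion of edges can only lower the maximum danger, the new value of $\Sigma$ is at most $m\max d^{(b+1)}$, and using $q-1=b$,
\[
\Big(\alpha-\sum_{k=1}^b\max d^{(k)}\Big)+\frac{q-1}{m}\Sigma\ \le\ \alpha-\sum_{k=1}^b\big(\max d^{(k)}-\max d^{(b+1)}\big)\ \le\ \alpha<1,
\]
because $\max d^{(k)}\ge\max d^{(b+1)}$ for each $k$. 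This re-establishes the invariant, so $\Phi<1$ holds right after every move of either player; hence Maker never holds a full edge, and since the game is finite Breaker wins.

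The routine items I would just verify in passing are the convexity inequality, the identity $\sum_e w(e)\,a_e=\sum_i d(v_i)$, and the degenerate end-of-game turns on which a player has fewer than $m$, respectively $b$, unclaimed vertices to take. The genuinely delicate point — and the reason a naive approach stalls — is the choice of invariant: $\Phi$ on its own can strictly increase over a round, so one must also carry the one-move lookahead term $\frac{q-1}{m}\Sigma$, which caps Maker's best possible gain; Breaker's greediness together with the identity $q-1=b$ then makes that gain term and the $b$ greedy reductions telescope exactly.
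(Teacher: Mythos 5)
Your proof is correct: it is the classical Erd\H{o}s--Selfridge potential-function argument in Beck's biased form, with the weight $q^{-|e\setminus X|/m}$, the greedy danger-maximising Breaker, and the lookahead invariant $\Phi+\frac{q-1}{m}\Sigma<1$ all checking out (including the convexity bound, the telescoping against $q-1=b$, and the base case, which is exactly where the factor $(1+b)^{-1}$ is consumed). The paper itself gives no proof of this statement --- it imports it from the cited references --- and your argument is precisely the one those references use, so there is nothing to flag.
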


Continuing work of Chv\'atal and Erd\"os~\cite{chvatal1978biased}, Beck~\cite{beck1982remarks, beck1985random} studied specific games on the complete graph \(K_n\) where Maker's goal is to build a clique, a Hamiltonian cycle, a spanning tree, etc. Note that in this case, the vertices of the board are the edges of \(K_n\), and the edges of the board are the structures Maker wants to build (cliques, Hamiltonian cycles, etc.). Using the criterion above, together with a similar criterion for Maker, he determined the winner for those \((1,b)\)-games for a wide range of \(b\).

Since the seminal work of Erd\"os and R\'enyi who introduced the concept of random graphs (see e.g.\ \cite{bollobas2001random} for an introduction to this theory), there has been an ever-growing interest for the the properties of the random graph \(G_{n,p}\) and their respective thresholds. For a graph property \(\mathcal{P}_n\) indexed by \(n\), a weak threshold is a function \(p_0(n)\) such that \(\{G_{n,p} \in \mathcal{P}_n\}\) holds w.h.p.\footnote{With high probability, or w.h.p.\ for short, means with probability tending to \(1\) as \(n\) tends to infinity.}\ if \(p = \omega(p_0)\), and \(\{G_{n,p} \notin \mathcal{P}_n\}\) holds w.h.p.\ if \(p = o(p_0)\). The thresholds is semi-sharp if there exist constants \(0 < C_1 \le C_2\) such that the same holds respectively for \(p \le (C_1 - \varepsilon) p_0\) or \(p \ge (C_2 - \varepsilon) p_0\) for all postive \(\varepsilon\). It is sharp if \(C_1 = C_2\).

As part of this work, Stojakovi{\'c} and Szab{\'o}~\cite{stojakovic2005positional} studied those same games as Beck, but now played \(G_{n,p}\). Adapting techniques of Chv\'atal and Erd\"os~\cite{chvatal1978biased} as well as the criterion above, they computed the threshold \(p_0(n, b)\) for Maker winning the game as a function of \(b\). Since then, various papers have studied related questions and other games in which randomness is involved. For example, and to only cite a few, Clemens, Ferber, Krivelevich and Liebenau~\cite{clemens2012fast} study how fast Maker is able to win those games on \(G_{n,p}\), while Krivelevich and Kronenberg~\cite{krivelevich2015random} study games in which it is not the board that is random but rather who plays at each turn.  

In this paper, we also study random Maker-Breaker games but from a new point of view. Instead of looking at the hypergraph obtained from some subgraphs of \(G_{n,p}\), we take a look at the `Erd\"os-R\'enyi random hypergraph' \(H_{n,s,p}\), that is the hypergraph obtained from the complete \(s\)-uniform hypergraph by keeping each edge with probability \(p\), \emph{independently of other edges}. Our main contribution is to determine threshold for Maker winning the game on this hypergraph.

\begin{theorem}
	\label{thm:threshold}	
	Let \(m,b,s\) be fixed constants and let \(p_0\) denote the weak threshold for the property of Maker winning the \((m,b)\)-game on \(H_{n,s,p}\). Then: \begin{enumerate}[label=(\roman*)]
		\item if \(s \le m\) or \(b < m\), then \(p_0 = n^{-s}\); \label{itm:thm_threshold_1}
		\item if \(m = s-1 \le b\), then \(p_0 = n^{1 - s - \left\lceil \frac{b+1}{m} \right\rceil^{-1}}\); \label{itm:thm_threshold_2} 
		\item if \(m \le s-2, m \le b\), then \(p_0 = n^{1-s}\). Additionally, the threshold is semi-sharp. \label{itm:thm_threshold_3} 
	\end{enumerate}

\end{theorem}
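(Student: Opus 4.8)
The plan is to treat all three regimes through one lens: for each $(m,b,s)$ we pin down a \emph{critical gadget} $F$, a hypergraph carrying a Maker win in the $(m,b)$-game, and match its appearance threshold in $H_{n,s,p}$ against $p_0$. The one structural fact used repeatedly is a monotonicity: if $F$ is a sub-hypergraph of the board $H$ and Maker wins the $(m,b)$-game on $F$, then Maker wins on $H$ too --- he follows his $F$-strategy, treats every Breaker move outside $F$ as a pass (a restriction that can only help Maker), and if $F$ runs out of free vertices in the middle of a move he grabs the rest of $F$ and wastes the remaining picks elsewhere. So the ``Maker wins'' half of each threshold reduces to exhibiting $F$ and showing $F\subseteq H_{n,s,p}$ w.h.p.\ for $p=\omega(p_0)$; by the standard first/second-moment argument for subgraph appearance this happens exactly when $p\gg n^{-1/\rho(F)}$, where $\rho(F)=\max_{F'\subseteq F}e(F')/v(F')$. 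For the ``Breaker wins'' half we show that below $p_0$, w.h.p.\ $H_{n,s,p}$ splits into ``simple'' pieces, none containing a Maker gadget, on each of which Breaker wins by a strategy that composes into a global one. This is exactly the local/global dichotomy in the statement: in \ref{itm:thm_threshold_1}--\ref{itm:thm_threshold_2} a bounded $F$ suffices, so ``Maker-win'' is essentially ``contains $F$'' and the threshold is weak; in \ref{itm:thm_threshold_3} no bounded $F$ works, forcing a global structure and the semi-sharp behaviour.

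For \ref{itm:thm_threshold_1}, take $F$ to be a matching of $C=C(m,b,s)$ pairwise-disjoint edges, so $\rho(F)=1/s$ and $F$ appears exactly for $p=\omega(n^{-s})$, while for $p=o(n^{-s})$ the board has no edge at all and Breaker wins vacuously. It remains that, when $s\le m$ (then $C=1$) or when $b<m$, Maker wins on $C$ disjoint edges for a suitable constant $C$. The mechanism when $b<m$: each round Maker places one new vertex into each of $m\ge b+1$ fresh edges, developing edges strictly faster than Breaker (bias $b$) can destroy them; promoting a batch of $b+1$ sufficiently advanced edges by one vertex apiece costs only $b+1\le m$ picks and leaves one of them un-parriable after Breaker's reply, and iterating over the $O(s/m)$ ``levels of advancement'' (batch sizes growing by a bounded factor per level) yields a finished edge and fixes $C$. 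For \ref{itm:thm_threshold_2}, where $s=m+1$, the decisive feature is that \emph{every} edge holding at least one Maker vertex and no Breaker vertex is already one Maker-move from completion; so Maker aims to create more than $b$ such threats in a single move. He does this on $m$ vertex-disjoint sunflowers, each with $q=\lceil (b+1)/m\rceil$ petals of size $s-1$, by claiming the $m$ cores at once: all $mq\ge b+1$ petal-edges become threats simultaneously, Breaker kills at most $b$, and Maker completes a survivor. This gadget has $\rho(F)=q/\bigl(q(s-1)+1\bigr)$, giving the threshold $n^{1-s-1/q}$; and for $p$ below it $H_{n,s,p}$ is w.h.p.\ a disjoint union of very loose hypertrees/unicyclic hypergraphs with no such gadget, on each of which Breaker wins (a pairing strategy when $s\ge3$, an elementary degree argument when $s=2$), these strategies combining since $m\le b$.

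For the lower bound of \ref{itm:thm_threshold_3} one works just below the phase transition, $p\le(C_1-\varepsilon)n^{1-s}$: here w.h.p.\ every component of $H_{n,s,p}$ is a loose hypertree or loose unicyclic hypergraph of size $O(\log n)$, and --- crucially using $m\le s-2$, i.e.\ $s\ge3$, so that leaves and low-degree edges have at least two private vertices, together with the fact that no sub-hypergraph of pair-density above $\tfrac12$ can appear in this range of $p$ --- each component admits a Breaker pairing strategy, i.e.\ a matching on $V$ meeting every edge in a pair. Their union is a global pairing, which wins for Breaker because $m\le b$ (with a mild robustification --- a localised Erd\H{o}s--Selfridge bound, or a multi-pairing --- to cover the case where a single Maker turn claims both vertices of a pair when $m\ge2$). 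Here $C_1$ may be taken up to the critical constant for the emergence of the $2$-core/giant component, and specialising to $p=o(n^{1-s})$ gives the weak-threshold half.

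The real obstacle is the ``Maker wins'' half of \ref{itm:thm_threshold_3}, for $p\ge(C_2+\varepsilon)n^{1-s}$. Having $m\le s-2$ kills the engine of \ref{itm:thm_threshold_2} --- an edge with one Maker vertex is still at least two moves from completion, so no bounded gadget is Maker-win (this is exactly what lifts the threshold to the appearance of an unbounded, cycle-carrying structure) --- and one cannot fall back on a Beck-type Maker criterion, the giant component being far too sparse for it to bite. So Maker must play a genuinely adaptive, threat-accumulating strategy on the giant component, exploiting that above the threshold its $2$-core is non-empty and expanding: every vertex lies in at least two edges, so each vertex Maker claims primes at least two new edges, a supply of primed edges grows faster than Breaker can prune it, and expansion lets Maker steer two primed edges to share an unclaimed vertex, after which one more move produces a double threat Breaker's bias-$b$ reply cannot fully parry. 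Turning this into a proof --- choosing a potential that certifies Maker's steady progress, and extracting the precise expansion available only once $p$ exceeds a sufficiently large constant multiple of $n^{1-s}$ (which is what makes the threshold semi-sharp rather than sharp, and underlies the conjecture $C_1=C_2$) --- is the technical heart of the argument, and where I expect most of the work to lie.
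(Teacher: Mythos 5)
Your treatment of the Maker sides of \ref{itm:thm_threshold_1} and \ref{itm:thm_threshold_2} matches the paper (matchings of disjoint edges; $m$ disjoint sunflowers with $q=\lceil(b+1)/m\rceil$ petals whose centres Maker claims in one move, creating $\ge b+1$ simultaneous threats), and the first/second-moment appearance calculation is the same. But the Breaker side of \ref{itm:thm_threshold_2} is wrong as you state it: a $q$-petal sunflower \emph{is} a hypertree (connected, excess $-1$), and $m$ disjoint copies of it form a forest on which Maker wins when $m=s-1$, so ``Breaker wins on every disjoint union of hypertrees/unicycles'' is false in this regime; moreover a pairing strategy breaks when $m=s-1\ge 2$, since Maker can claim both vertices of a pair in one turn (already for a single triple with $s=3$, $m=2$ this leaves the edge one move from completion with no designated reply). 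The property you actually need below $n^{1-s-1/q}$ is that \emph{every vertex has degree $<q$}: then after any Maker move the at most $m(q-1)\le b$ edges meeting his new vertices can all be hit by Breaker in the same turn. This degree bound is exactly what the paper's Lemma~\ref{lem:degree_threshold} delivers, and it is the right local statement (the tree/unicycle structure is neither necessary nor sufficient here). Similarly, for the Breaker side of \ref{itm:thm_threshold_3} the paper avoids your pairing ``robustification'' entirely: from the tree/unicycle decomposition it extracts, via a max-flow/Hall argument using $ex\le 0$, a system of \emph{disjoint $(s-1)$-subsets, one inside each edge}; Breaker then wins on the resulting disjoint-$(s-1)$-edge board simply because $s-1>m$ (no edge is completable in one Maker turn) and $b\ge m$ (one reply per edge Maker touches). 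You should prove a Hall-type condition in any case, so you may as well prove the stronger one and sidestep the $m\ge 2$ pairing problem.

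The genuine gap is the Maker half of \ref{itm:thm_threshold_3}, which you explicitly defer as ``the technical heart'': an adaptive, threat-accumulating strategy on the expanding $2$-core. That programme is not carried out, and it is not needed. The paper's argument is a one-paragraph union bound: at the end of the game Maker holds at least $t=\frac{mn}{m+b}$ vertices, and for $p\ge c_2 n^{1-s}$ with $c_2$ large enough,
\[
\binom{n}{t}(1-p)^{\binom{t}{s}} \le \Bigl(e\tfrac{m+b}{m}\Bigr)^{t} e^{-(1-o(1))\frac{c_2}{s!}(\frac{m}{m+b})^{s-1} t} \longrightarrow 0,
\]
so w.h.p.\ \emph{every} $t$-subset of $V$ spans an edge and Maker wins with any strategy whatsoever. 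No game-theoretic content is required on the upper side; the constant $c_2$ from this bound, together with the Karo\'nski--{\L}uczak constant $c_1$ on the lower side, is precisely what makes the threshold semi-sharp. As written, your proposal proves \ref{itm:thm_threshold_1}, essentially proves \ref{itm:thm_threshold_2} modulo fixing the Breaker argument, and leaves \ref{itm:thm_threshold_3} open.
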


As mentioned earlier, we assume that Maker makes the first move. It is nonetheless straightforward to see that most statements, and in particular Theorem~\ref{thm:threshold} above, hold if we assume that Breaker starts. 

Before going into more specifics, we note that in our case, as \(m, b, s\) are constants, Erd\"os-Selfridge-Beck Criterion only guarantees that Breaker wins if there is a finite number of edges. This is of little help, and for this reason, the arguments developed here differ from those presented in the aforementioned papers.

This paper is divided as follows. In Section~\ref{sec:preliminary} we introduce some definitions and make some elementary remarks. In Section~\ref{sec:easy_case} we prove points \ref{itm:thm_threshold_1} and \ref{itm:thm_threshold_2} of Theorem~\ref{thm:threshold}. The proofs of those two points are fairly similar in the sense that they both rely on local properties of \(H_{n,s,p}\). In Section~\ref{sec:hard_case} we prove point~\ref{itm:thm_threshold_3} of Theorem~\ref{thm:threshold}. Unlike \ref{itm:thm_threshold_1} and \ref{itm:thm_threshold_2}, this proof relies on global properties of the graph.

\section{Preliminary}
\label{sec:preliminary}

We denote the natural logarithm by \(\log\). Whenever they are not essential, we omit the floor \(\lfloor \cdot \rfloor\) and ceil \(\lceil \cdot \rceil\) signs from our computations. 

We use standard graph-theoretic notations (see e.g.\ \cite{diestel2017graph}). A hypergraph is a pair \(H = (V, E)\) where \(V\) is an arbitrary finite set called \emph{vertex set} and \(E \subseteq 2^V\) is called \emph{edge set}. Elements of \(V\) and \(E\) are respectively called vertices and edges. When dealing with several hypergraphs and whenever there may be some confusion, we write \(V(H), E(H)\) to emphasise that we are talking about the vertices or edges of hypergraph \(H\). We say that \(H\) is \(s\)-uniform or a \(s\)-graph if all its edges have size \(s\). The degree of a vertex \(v\), which we write \(d(v)\), is the number of edges containing \(v\).

In Section~\ref{sec:hard_case} it will be convenient to not look at the game on \(H_{n,s,p}\) but at a related game, which is `easier' for Maker. Intuitively, if a game has more and shorter edges than another game, then it should be easier for Maker to win on the former. The definition below formalises this idea.
\begin{definition}
	Let \(H = (V,E)\), \(H' = (V,E')\) be two hypergraphs (not necessarily uniform) on the same vertex set. We say that \(H\) is easier than \(H'\) and write \(H \le H'\) if for all \(e' \in E'\) there exists \(e \in E\) such that \(e \subseteq e'\).  
\end{definition}

So that it is straightforward to use in Section~\ref{sec:hard_case}, we make the following remark, which is a just a consequence of the definition above.
\begin{remark}
	\label{rk:harder_means_harder} If \(H \le H'\) and Breaker has a winning strategy for the \((m,b)\)-game on \(H\), then the same strategy is also a winning strategy for the \((m,b)\)-game on \(H'\).
\end{remark}

Our proof of Theorem~\ref{thm:threshold}~\ref{itm:thm_threshold_3} also uses results of Karo{\'n}ski and {\L}uczak~\cite{karonski2002phase} so we give the following definitions from their paper.
\begin{definition}[Connectedness]
	Let \(H = (V,E)\) be a hypergraph. We say that \(H\) is connected if for all \(x, y \in V\) there exists edges \(e_1, \dots e_k\) such that \(x \in e_1\), \(y \in e_k\) and \(e_i \cap e_{i+1} \ne \emptyset\) for all \(i \in \{1, \dots, k-1\}\).
\end{definition}

\begin{definition}[Excess]
	Let \(H = (V, E)\) be a \(s\)-graph. The excess of \(H\) is defined as \[ex(H) = (s-1)|E| - |V|.\]
	If \(H\) is connected and \(ex(H) = -1\), we say that \(H\) is a \emph{tree}; if \(H\) is connected and \(ex(H) = 0\), we say that \(H\) is a \emph{unicycle}.
\end{definition}

Trees and unicyclic hypergraphs essentially behave like (standard) trees and unicyclic graphs so we give the following results without proof.

\begin{claim}
	\label{clm:excess_of_union}
	If \(H\), \(H'\) are two \(s\)-graphs on disjoint vertex sets, then the excess of their union is the sum of their excess. In particular, if a \(s\)-graph \(H\) is a disjoint collection of trees and unicycles, then its excess is at most \(0\).
		
	If \(H\) is a collection of disjoint trees and unicycles, then so is any \(H' \subseteq H\).
\end{claim}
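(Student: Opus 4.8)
The first two sentences follow at once from the definition. Since \(H\) and \(H'\) have disjoint vertex sets, their union \(\tilde H\) satisfies \(|V(\tilde H)|=|V(H)|+|V(H')|\) and \(|E(\tilde H)|=|E(H)|+|E(H')|\), whence \(ex(\tilde H)=(s-1)|E(\tilde H)|-|V(\tilde H)|=ex(H)+ex(H')\); an obvious induction extends this to any finite disjoint union. In particular a disjoint collection of \(k\) trees and \(\ell\) unicycles has excess \(k\cdot(-1)+\ell\cdot 0=-k\le 0\).

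For the last assertion, the plan is to reduce first to a single component and then to a purely numerical statement about the excess. A connected sub-hypergraph of \(H\) must lie inside one connected component of \(H\) (a chain of pairwise-intersecting edges witnessing connectivity in \(H'\) also witnesses it in \(H\)), and an edgeless component of \(H'\) is a single vertex, which has excess \(-1\) and is thus a (degenerate) tree. So it suffices to show that a connected sub-hypergraph \(G\) of a tree is again a tree, and that a connected sub-hypergraph of a unicycle is either a tree or a unicycle; as \(G\) is connected, this just means pinning down \(ex(G)\).

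I would obtain this from two observations, each via a greedy ordering of edges that uses connectivity. First, every connected \(s\)-graph \(G\) with at least one edge has \(ex(G)\ge -1\): order its \(m\) edges as \(e_1,\dots,e_m\) so that each \(e_i\) with \(i\ge 2\) meets some earlier edge; then \(e_1\) contributes \(s\) vertices and each later \(e_i\) at most \(s-1\), so \(|V(G)|\le s+(m-1)(s-1)=(s-1)m+1\). Second, if \(G\) is a connected sub-hypergraph of a connected \(s\)-graph \(K\), then \(ex(G)\le ex(K)\): order the edges of \(E(K)\setminus E(G)\) so that each successive one meets the vertex set accumulated so far — possible at every stage because \(K\) is connected — so that each contributes at most \(s-1\) new vertices and hence \(|V(K)|\le |V(G)|+(s-1)\bigl(|E(K)|-|E(G)|\bigr)\), which rearranges to \(ex(G)\le ex(K)\). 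Combining the two, if \(K\) is a tree then \(-1\le ex(G)\le -1\), and if \(K\) is a unicycle then \(-1\le ex(G)\le 0\); in either case \(G\) has the required type, which finishes the proof. I expect no real obstacle here: the only points needing care are the bookkeeping around isolated vertices and the exact reading of \(H'\subseteq H\), and checking that the two greedy edge-orderings exist — which is precisely where connectivity is used.
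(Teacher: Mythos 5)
Your argument is correct, and it is essentially the standard one: the additivity of excess over disjoint unions is immediate from the definition, and the two greedy edge-orderings (giving $ex(G)\ge -1$ for connected $G$ with an edge, and $ex(G)\le ex(K)$ for connected $G$ inside connected $K$) pin down the excess of each component of $H'$. Note that the paper deliberately states this claim \emph{without} proof, appealing to the analogy with ordinary trees and unicyclic graphs, so there is no proof to compare against; your write-up simply supplies the omitted details, and the only genuine care points are the ones you already flag (isolated vertices count as degenerate trees under the stated definitions, and the greedy orderings exist by the chain characterisation of connectedness).
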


\section{Proof of Theorem~\ref{thm:threshold}~\ref{itm:thm_threshold_1} and \ref{itm:thm_threshold_2}}

\label{sec:easy_case}

To prove points \ref{itm:thm_threshold_1}, \ref{itm:thm_threshold_2} we will study local properties of the random hypergraph \(H_{n,s,p}\). More precisely, we will show that when \(p\) is under the threshold, then no vertex has high degree and, at each turn, Breaker is able to `kill' all edges in which Maker has taken a vertex. On the other hand, when \(p\) is above the threshold, some subgraphs of constant size on which Maker wins appear in \(H_{n,s,p}\). 

First, we introduce the following graph which will be quite helpful.
\begin{definition}
	The \(d\)-star is the \(s\)-graph composed of \(d\) edges, all intersecting in a unique, common vertex called the \emph{centre}.
\end{definition}

Note in particular that the \(1\)-star is just a single edge. We can now summarise the ideas above in the following lemma.

\begin{lemma}
	\label{lem:degree_threshold}
	Let \(d\) be a positive integers. \begin{enumerate}[label=(\roman*)]
		\item If \(p = o \left( n^{1 - s - d^{-1}} \right)\), then w.h.p.\ \(H_{n,s,p}\) contains no vertex of degree at least \(d\); \label{itm:degree_threshold_1}
		\item if \(p = \omega \left( n^{1 - s - d^{-1}} \right)\), then w.h.p.\ \(H_{n,s,p}\) contains \(\omega(1)\) disjoint \(d\)-stars.\label{itm:degree_threshold_2}
	\end{enumerate}
\end{lemma}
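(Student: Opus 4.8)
The plan is to prove both parts by the first- and second-moment methods applied to the count of $d$-stars (or rather, the count of high-degree vertices together with appropriately disjoint stars).

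\textbf{Part \ref{itm:degree_threshold_1}.} First I would bound the expected number of vertices of degree at least $d$. A fixed vertex $v$ lies in $\binom{n-1}{s-1} = \Theta(n^{s-1})$ potential edges, each present independently with probability $p$. The probability that $v$ has degree at least $d$ is at most $\binom{\Theta(n^{s-1})}{d} p^d = O\left( n^{(s-1)d} p^d \right)$. Multiplying by the $n$ choices of $v$, the expected number of such vertices is $O\left( n^{1 + (s-1)d} p^d \right) = O\left( \left( n^{1/d + s - 1} p \right)^d \right)$. Under the hypothesis $p = o\left( n^{1-s-d^{-1}} \right)$ this is $o(1)$, so by Markov's inequality w.h.p.\ there is no vertex of degree $\ge d$.

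\textbf{Part \ref{itm:degree_threshold_2}.} Here I would show that $H_{n,s,p}$ contains many disjoint $d$-stars. The cleanest route: partition the vertex set (or a large portion of it) into $\Theta(n)$ blocks each of size roughly $\omega(1)$ but bounded, say blocks of size $t$ where $t \to \infty$ slowly; within each block independently check whether it contains a $d$-star. Actually a simpler approach is to directly run a second-moment argument on the number $X$ of $d$-stars (say, labelled copies with a designated centre) in $H_{n,s,p}$: compute $\EE[X] = \Theta\left( n^{1+(s-1)d} p^d \right) = \omega(1)$ under the hypothesis, and bound $\EE[X^2]$ by splitting over the overlap pattern of two stars, showing the dominant contribution comes from disjoint pairs so that $\EE[X^2] = (1+o(1))\EE[X]^2$. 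Then Chebyshev gives $X = \omega(1)$ w.h.p. To upgrade "many stars" to "many \emph{disjoint} stars" I would note that each star has bounded size ($1 + d(s-1)$ vertices), and either (a) use the second-moment argument inside a linear number of disjoint bounded-size vertex blocks so that the stars found are automatically disjoint, or (b) observe that w.h.p.\ no vertex has degree $\omega(1)$ (from a variant of part \ref{itm:degree_threshold_1} with a growing $d$), so the $d$-stars cannot all pile up on few vertices, and greedily extract $\omega(1)$ disjoint ones. I expect the block-partition version to be the least painful to write.

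\textbf{Main obstacle.} The routine but genuinely fiddly part is the second-moment computation: one must enumerate the ways two $d$-stars can share vertices (shared centre vs.\ shared leaf-vertices vs.\ shared edges) and verify that every non-disjoint configuration contributes $o(\EE[X]^2)$. This requires that $\EE[X] \to \infty$, which is exactly where $p = \omega(n^{1-s-d^{-1}})$ is used, and a little care because the overlap terms are only lower-order when $\EE[X] = \omega(1)$, not merely $\Omega(1)$. Packaging this so it simultaneously yields \emph{disjointness} of a divergent number of stars — rather than just existence of one — is the step I'd be most careful about; the block-partitioning trick sidesteps the disjointness bookkeeping at the cost of a slightly weaker but still $\omega(1)$ count, which is all the lemma asks for.
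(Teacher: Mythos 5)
Part~\ref{itm:degree_threshold_1} of your proposal is correct and is exactly the paper's argument (first moment on the number of high-degree vertices, then Markov). The second-moment core of Part~\ref{itm:degree_threshold_2} --- computing \(\EE[X]=\Theta(n^{1+(s-1)d}p^d)=\omega(1)\) and showing the overlapping pairs contribute \(o(\EE[X]^2)\) --- is also exactly what the paper does, and it goes through for all \(p=\omega(n^{1-s-d^{-1}})\) without further assumptions.

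The gap is in the step you yourself flag as delicate: extracting \(\omega(1)\) \emph{disjoint} stars. Your preferred route (a) does not work as calibrated. If you partition into blocks of size \(t\) with \(t\to\infty\) slowly (or \(t\) bounded), the expected number of \(d\)-stars contained \emph{inside} a single block is \(\Theta(t^{1+(s-1)d}p^d)\), which at \(p=\Theta(n^{1-s-d^{-1}})\) equals \(\Theta\bigl((t/n)^{1+(s-1)d}\bigr)\); summed over all \(n/t\) blocks this is \(\Theta\bigl((t/n)^{(s-1)d}\bigr)=o(1)\) whenever \(t=o(n)\). So w.h.p.\ \emph{no} block contains a star unless the blocks have size \(n/\omega(1)\), i.e.\ nearly all of \(n\); a star's vertex set is constant-sized but its \(n^{1+(s-1)d}\) possible placements are spread over the whole vertex set, and confining all of them to a small block kills essentially all of them. (The block idea can be rescued with \(f(n)^{\Theta(1)}\) blocks of size \(n/f(n)^{\Theta(1)}\) where \(p=f(n)\,n^{1-s-d^{-1}}\), but then you need a second-moment argument inside each huge block anyway, so nothing is saved.) Your fallback (b) is essentially sound but incomplete as stated: to get a useful maximum-degree bound from Part~\ref{itm:degree_threshold_1} you must first cap \(p\) from above, which you can do by monotonicity of the number of disjoint stars --- reduce to \(p\le n^{1-s-d^{-1}+\varepsilon}\) for a small constant \(\varepsilon\); without this reduction \(p\) could be as large as \(1\) and degrees are unbounded. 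The paper performs exactly this monotonicity reduction (with \(\varepsilon=1/2d^2\)) and then resolves disjointness more cleanly than either of your options: it shows that the expected number of pairs of distinct intersecting \(d\)-stars is \(o(1)\) (summing over the possible overlap patterns \((i,j)\) of shared vertices and edges, using \(i\ge 1+(s-1)j\)), so that w.h.p.\ \emph{all} stars present are pairwise disjoint and the \(\omega(1)\) stars given by the second moment method are automatically the desired disjoint family. You should adopt that reduction and the ``no two stars intersect'' computation; it also doubles as the variance bound, since \(\Var Y\le\EE[Y]+\EE[Z]\).
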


Before we prove this lemma, we show how it implies \ref{itm:thm_threshold_1} and \ref{itm:thm_threshold_2} of Theorem~\ref{thm:threshold}.

\begin{proof}[Proof of Theorem~\ref{thm:threshold}~\ref{itm:thm_threshold_1} and \ref{itm:thm_threshold_2}]
	
	We start by proving \ref{itm:thm_threshold_1}. If \(s \le m\), Maker wins as long as there is an edge; if \(b < m < s\), the work of Hamidoune and Las Vergnas~\cite{hamidoune1987solution} implies that there exists \(M = M(m,b,s)\) such that Maker wins on \(M\) disjoint \(s\)-uniform edges. Applying Lemma~\ref{lem:degree_threshold} with \(d=1\), we see that when \(p = o(n^{-s})\) w.h.p.\ there are no edges in \(H_{n,s,p}\) and when \(p = \omega(n^{-s})\) w.h.p.\ there are \(\omega(1) \ge M\) disjoint edges. So \(p_0 = n^{-s}\) is indeed a threshold when \(s \le m\) or \(b < m < s\).
	
	For \ref{itm:thm_threshold_2}, observe that if there are only vertices of degree (strictly) smaller than \( \left\lceil \frac{b+1}{m} \right\rceil \), then after each turn, for any choice of \(m\) vertices by Maker, there are at most \(m \left( \left\lceil \frac{b+1}{m} \right\rceil - 1 \right) \le b\) edges containing a vertex of Maker and Breaker can delete them all. Now suppose there exist \(m\) disjoint \(d\)-stars with \(d = \left\lceil \frac{b+1}{m} \right\rceil\). Then Maker can take the centres of each on their first turn. This leaves at least \(b+1\) disjoint edges and Breaker cannot pick a vertex in each. Maker simply take the \(s-1 = m\) remaining vertices in one of those and wins. As shown by Lemma~\ref{lem:degree_threshold}, if \(p = o\left( n^{1 - s - \left\lceil \frac{b+1}{m} \right\rceil} \right)\) then w.h.p.\ we are in the first case and Breaker wins, and if \(p = \omega\left( n^{1 - s - \left\lceil \frac{b+1}{m} \right\rceil} \right)\) w.h.p.\ we are in the second and Maker wins.
\end{proof}

The rest of this section is dedicated to proving Lemma~\ref{lem:degree_threshold}.

\begin{proof}[Proof of Lemma~\ref{lem:degree_threshold}]
	We start by proving \ref{itm:degree_threshold_1}. For each vertex \(v\), we denote by \(X_v\) the indicator random variable that \(v\) has degree at least \(d\), so that \(X = \sum_{v\in V}{X_v}\) counts the number of vertices of degree at least \(d\). 
	Consider a fixed \(v\). There are \(n \choose s-1\) potential edges containing it, each present independently of others with probability \(p\), so that 
		\[\EE[X_v] = \sum_{k = d}^{n-1 \choose s-1}{ {{n-1 \choose s-1} \choose k } p^k (1 - p)^{ {n-1 \choose s-1} - k }  } = O \left( n^{d(s-1)} p^d \right).\] 
	By linearity, \(\EE[X] \le O \left( n^{1 + d(s-1)} p^d \right) = o(1)\) and by Markov inequality, we conclude that \(\Pr[ X \ge 1 ] \le \EE[X] \le o(1) \).

	We now turn our attention to \ref{itm:degree_threshold_2}. First observe that the maximum number of non-intersecting \(d\)-stars is monotone in \(p\), so it suffices to prove the claim for all \(\omega \left( n^{1 - s - d^{-1}} \right) \le p \le o \left( n^{1 - s - d^{-1} + \varepsilon} \right)\) where \(\varepsilon\) is some positive constant. Here we choose \(\varepsilon = 1/2d^2\).
	
	Let us introduce the following random variables: \(Y\) denotes the total number of copies of \(d\)-stars and \(Z\) denotes the number of pairs of distinct, but intersecting \(d\)-stars. By considering the set of all stars and arbitrarily removing one star for each intersecting pair, one obtains a set of disjoint stars. Hence to prove the claim, it is sufficient to show that \(Y = \omega(1)\) w.h.p.\ and \(Z = 0\) w.h.p.
	
	First, we show that w.h.p.\ \(Z = 0\). Observe that two distinct but intersecting stars may share \(j = 0, \ldots, d-1\) edges and need share \(i \ge 1 + (s-1)j\) vertices. We may decompose \(Z\) into 
		\[Z = \sum_{(i,j)}{Z_{i,j}},\] 
	where \(Z_{i,j}\) counts the number of pairs of stars sharing \(i\) vertices and \(j\) edges. 
	For a fixed \((i, j)\) and for a fixed subset of \(2( 1 + (s-1)d) - i\) vertices, there is a bounded number of ways to embed two stars sharing \(i\) vertices, \(j\) edges. Each such embedding has probability \(p^{2d - j}\) of being present in \(H_{n,s,p}\). Hence, we have 
		\begin{align*}
			\EE[Z_{i,j}]
				&= O \left( n^{2(1 - (s-1)d) - i} p^{2d - j} \right) \\
				&= O \left( n^{2(1 - (s-1)d)} p^{2d} \right) \cdot \Theta \left( n^{-i} p^{-j} \right). \numberthis \label{eq:degree_threshold_expY}
		\end{align*}
	
	Note that because we chose \(p \le o \left( n^{1 - s - d^{-1} + \varepsilon} \right)\) the first term on the RHS of \eqref{eq:degree_threshold_expY} is \(\Theta \left( n^{2(1 - (s-1)d)} p^{2d} \right) = o(n^{2 d \varepsilon}).\)
	Also, since \(i \ge 1 + (s-1)j\) and \(p \ge \omega \left( n^{1 - s - d^{-1}} \right)\), the second term on the RHS of \eqref{eq:degree_threshold_expY} is \(\Theta\left( n^{-i}p^{-j} \right) = o( n^{-1 + j/d} ) = o(n^{-1/d}).\) Combining those two and recalling that we chose \(\varepsilon = 1/2d^2\), we find \(\EE[Z_{i,j}] = o(1)\). The set of possible \((i,j)\) is of constant size, so we deduce that 
		\begin{align*}
			\EE[Z] = \sum_{(i,j) \in I}{\EE[Z_{i,j}]} = o(1),\label{eq:degree_threshold_expY_approx} \numberthis
		\end{align*}
		and conclude using the first moment method.
	
	Let us now prove that w.h.p.\ \(Y = \omega(1)\). For each \(d\)-star \(S\) in the complete \(s\)-graph, we denote by \(Y_S\) the indicator random variable that \(S\) is in \(H_{n,s,p}\). There are \(\Theta( n^{1 + (s-1)d} )\) such \(S\), each has probability \(p^d\) of being in \(H_{n,s,p}\). By linearity of expectation, it follows that the expected total number of copies of stars in \(H_{n,s,p}\) is 
		\begin{align*}
			\EE[Y] = \sum_{S}{\EE[Y_S]} = \Theta( n^{1 + (s-1)d} p^d ) = \omega(1), \numberthis \label{eq:degree_threshold_expX}
		\end{align*} since \(p = \omega \left( n^{1 - s - d^{-1}} \right)\).
	We wish to apply the second moment method to conclude. For that, we observe that
		\begin{align*}
			\Var Y \le \EE[Y] + \EE[Z]. \label{eq:degree_threshold_varX_rel} \numberthis
		\end{align*}
	To see this, recall that \((Y_S)_S\) are indicator random variables, so
		\[\Var Y = \sum_{S, S'}{ \EE[Y_S Y_{S'}] - \EE[Y_S] \EE[Y_{S'}] } \le \sum_{S \cap S' \neq \emptyset}{ \EE[Y_S Y_{S'}] } = \EE[Y] + \EE[Z]. \]
	
	Combining \eqref{eq:degree_threshold_expY_approx}, \eqref{eq:degree_threshold_expX} and \eqref{eq:degree_threshold_varX_rel}, we have \(\Var Y = o(\EE[Y]^2)\) and conclude that \(Y = \omega(1)\) w.h.p.\ using the second moment method.
	
\end{proof}

\section{Proof of Theorem~\ref{thm:threshold}~\ref{itm:thm_threshold_3}}
\label{sec:hard_case}

The approach in this section will differ substantially from the one in the previous section. In the case \(m \le s-2\), \(m \le b\), it appears that being Maker- or Breaker-win is not longer a local property but rather a global property. Let us detail: in the previous section, we showed that, roughly speaking, Maker started winning on \(H_{n,s,p}\) as soon as some game --- a collection of finitely many edges or \(d\)-stars --- started appearing. The same phenomenon does not happen in the case \(m \le s-2, m \le b\). Actually, and as we discuss further in Section~\ref{sec:conclusion}, when \(p = \Theta(n^{1-s})\), any given Maker-win game has low probability of being present. 

So instead of looking for local properties, we identify two global properties of \(H_{n,s,p}\) whose (sharp) thresholds are of order \(n^{1-s}\) and show that they guarantee, respectively, that Maker or Breaker wins.

The first of those global properties is the equivalent for hypergraphs of the well-known fact (see e.g.~\cite{bollobas2001random}) that when the number of edges is low, the typical random graph only contains trees and unicyclic components.

\begin{theorem}[Karo\'nski, \L{}uczak~\cite{karonski2002phase}]
	\label{thm:tree_unicycle_decomp}
	There exists a constant \(0 < c_1\) such that if \(p \le c_1 n^{1-s}\) then w.h.p.\ \(H_{n,s,p}\) is a disjoint collection of trees and unicycles.
\end{theorem}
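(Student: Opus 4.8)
The plan is to prove this by a first-moment computation, following the classical analysis of the subcritical phase of \(G_{n,p}\). The first step is a reduction to the non-existence of a single bad substructure. If \(H_{n,s,p}\) is not a disjoint collection of trees and unicycles, then some component \(K\) is neither a tree nor a unicycle; since \(K\) is connected it satisfies \(ex(K) \ge -1\) (every connected \(s\)-graph on \(v\) vertices has at least \((v-1)/(s-1)\) edges, as one sees by growing it edge by edge), so ``\(K\) is not a tree'' forces \(ex(K) \ge 0\) and ``\(K\) is not a unicycle'' forces \(ex(K) \ge 1\). Hence \(K\) is a connected sub-hypergraph of positive excess, and it suffices to show that w.h.p.\ \(H_{n,s,p}\) contains no connected sub-hypergraph of excess at least \(1\); equivalently, writing \(X\) for the number of such sub-hypergraphs, it suffices to prove \(\EE[X] = o(1)\) and apply Markov's inequality.

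To bound \(\EE[X]\) I would group the connected \(s\)-graphs by their number of vertices \(v\), number of edges \(e\), and excess \(\ell := (s-1)e - v \ge 1\). A fixed isomorphism type on a fixed vertex set is present with probability \(p^e\), and there are at most \(\binom{n}{v}\) choices of vertex set, so the contribution of a single type is at most \(\binom{n}{v} p^e \le n^v (c_1 n^{1-s})^e = c_1^e\, n^{v-(s-1)e} = c_1^e\, n^{-\ell}\). What remains is to count connected \(s\)-graphs on \(v\) labelled vertices of excess \(\ell\): morally such a graph is a connected tree-like skeleton (on the order of \(v/(s-1)\) edges) together with \(O(\ell)\) extra edges, so one expects a bound of the shape \(T_s(v)\cdot v^{O(s\ell)}\) for an appropriate ``number of \(s\)-trees'' \(T_s(v)\). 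Feeding this in, using \(\binom{n}{v} \le n^v/v!\) and a good bound on \(T_s(v)\), and summing over \(v\) — a convergent sum once \(c_1\) is small enough that each extra vertex carries a factor \(\le \rho < 1\) — one should obtain \(\EE[X_\ell] \le (C c_1 \ell/n)^\ell\) for \(\ell \le n\) and some \(C = C(s)\); the range \(\ell > n\) is handled crudely, since there \(c_1^e\) with \(e > n/(s-1)\) already beats the trivial count \(2^{\binom{n}{s}}\) super-polynomially. Then \(\EE[X] = \sum_{\ell \ge 1}\EE[X_\ell] = O(1/n) = o(1)\), dominated by the \(\ell = 1\) term, and Markov's inequality finishes the proof.

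The hard part is the enumeration step: one needs a bound on \(T_s(v)\) (hence on the number of connected \(s\)-graphs of given size and excess) that is precise enough in its joint dependence on \(v\) and \(\ell\) for the double sum to come out \(o(1)\); the naive bound \(\binom{\binom{v}{s}}{e}\), which ignores connectivity, is far too weak once \(v\) is large. For \(s = 2\) this is Cayley's formula and the whole computation is elementary, but for general \(s\) it is exactly the asymptotic enumeration of connected hypergraphs carried out by Karo\'nski and \L{}uczak as part of their analysis of the phase transition of \(H_{n,s,p}\); for this reason I would quote the statement from \cite{karonski2002phase} rather than reprove it. A minor further point is that the argument only yields \emph{some} constant \(c_1\): it must be taken small enough that the per-vertex factor is bounded by some \(\rho < 1\), and no effort is made to optimise its value.
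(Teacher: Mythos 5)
The paper does not prove this statement at all: it is quoted verbatim as an external result of Karo\'nski and \L{}uczak~\cite{karonski2002phase}, so there is no internal proof to compare against. Your first-moment outline (reduce to the non-existence of a connected sub-hypergraph of excess \(\ge 1\), then bound \(\binom{n}{v}p^e \le c_1^e n^{-\ell}\) against an enumeration of connected \(s\)-graphs by excess) is the standard subcritical argument, and since you yourself defer the crux --- that enumeration --- to the very same reference, your treatment is consistent with the paper's.
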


We show that on such a game, Breaker has a winning strategy. 

\begin{lemma}
	\label{lem:Bwin_tree_unic}
	If \(m \le s-2, m \le b\) then Breaker wins on any collection of disjoint trees and unicycles.
\end{lemma}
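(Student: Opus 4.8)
The plan is to give Breaker an explicit blocking (pairing-type) strategy. The only structural fact we need about the board is that a disjoint collection $H$ of trees and unicycles, and every sub-hypergraph of it, has excess at most $0$ (Claim~\ref{clm:excess_of_union}); equivalently, $(s-1)|E(H')| \le |V(H')|$ for every $H' \subseteq H$. Since we are in the regime $s \ge m+2$, this gives the cleaner bound $|V(H')| \ge (m+1)\,|E(H')|$, which is exactly what makes the strategy below work.

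\textbf{Step 1: choosing the blocks.} For each edge $e \in E(H)$ I would select a subset $B_e \subseteq e$ with $|B_e| = m+1$ so that the family $(B_e)_{e \in E(H)}$ is pairwise disjoint. To see such a family exists, apply Hall's theorem to the bipartite graph whose left vertices are the pairs $(e,i)$ with $e \in E(H)$ and $1 \le i \le m+1$, whose right vertices are $V(H)$, and in which $(e,i)$ is joined to $v$ iff $v \in e$: a left-saturating matching is precisely such a disjoint family. The Hall condition reduces to requiring, for every $F \subseteq E(H)$, that $\bigl|\bigcup_{e \in F} e\bigr| \ge (m+1)|F|$; but $\bigcup_{e \in F} e = V(H[F])$ where $H[F]$ is the sub-hypergraph of $H$ with edge set $F$, so this is exactly the excess bound $|V(H[F])| \ge (m+1)|F|$ noted above.

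\textbf{Step 2: Breaker's strategy and the invariant.} Breaker plays so as to maintain, after each of his turns, the invariant: for every edge $e$, either Breaker owns a vertex of $B_e$, or no vertex of $B_e$ has been claimed by anyone. In a given round, after Maker has chosen his $m$ vertices, call an edge $e$ \emph{fresh} if no vertex of $B_e$ was claimed before this round but Maker has just claimed a vertex of $B_e$. Breaker \emph{responds} to every fresh edge $e$ by claiming one still-unclaimed vertex of $B_e$, and spends any remaining moves arbitrarily. This is well-defined: a fresh $B_e$ had all $m+1$ of its vertices unclaimed at the start of the round and Maker took at most $m$ of them, so an unclaimed vertex of $B_e$ remains; and it is affordable because the $B_e$ are pairwise disjoint, so each of Maker's $\le m$ picks lies in at most one $B_e$, giving at most $m \le b$ fresh edges. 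One checks the invariant is preserved: a block already containing a Breaker vertex keeps it; a fresh block receives a Breaker vertex this round; and a block that was untouched and is not fresh either stays entirely unclaimed or acquires a Breaker vertex via a leftover move — in all cases the invariant holds.

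\textbf{Step 3: conclusion.} At the end of the game every vertex of $V(H)$ is claimed, so no block $B_e$ is ``entirely unclaimed''; by the invariant Breaker owns a vertex of every $B_e \subseteq e$, hence Maker's final set contains no edge of $H$ and Breaker has won.

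I expect the only delicate point to be the bookkeeping in Step~2: that ``respond to fresh edges'' is a consistent, always-executable instruction, and that the two numerical facts it relies on — ``$|B_e| = m+1 > m$'', so a free vertex to grab always exists, and ``at most $m$ blocks are fresh per round'', so Breaker can afford the responses via disjointness of the $B_e$ — are precisely what the hypotheses $s \ge m+2$ and $m \le b$ buy us. (This also explains, informally, why the statement fails for $m = s-1$: then the blocks would have to be entire edges and could not be made disjoint, which is consistent with case~\ref{itm:thm_threshold_2} of Theorem~\ref{thm:threshold}.) The Hall-theorem argument in Step~1 is routine once Claim~\ref{clm:excess_of_union} is in hand.
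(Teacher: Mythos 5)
Your proof is correct and takes essentially the same route as the paper: both use the excess bound of Claim~\ref{clm:excess_of_union} to extract a family of pairwise disjoint blocks $B_e \subseteq e$, one per edge and of size exceeding $m$ (you get blocks of size $m+1$ via Hall's theorem, the paper gets blocks of size $s-1$ via max-flow/min-cut --- equivalent tools here), after which Breaker wins by the pairing strategy on disjoint blocks, which you spell out and the paper invokes implicitly through Remark~\ref{rk:harder_means_harder}. No gaps worth flagging.
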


\begin{proof}
	The idea is to show that if \(H = (V,E)\) is a collection disjoint trees and unicycles, there is a collection of disjoint \((s-1)\)-uniform edges \(H'\) on \(V\) such that \(H' \le H\). Since \(H'\) contains disjoint edges of size \(s-1 > m\) and \(b \ge m\), Breaker wins on \(H'\). Remark~\ref{rk:harder_means_harder} implies that Breaker also wins on \(H\).
	
	To show that such a \(H'\) exists, we take a look at a flow problem on a related auxiliary graph. Consider the directed graph \(G\) whose vertices are a source, a sink as well as the edges and vertices of \(H\), i.e.\ \(V(G) = \{v_{source}, v_{sink}\} \cup E(H) \cup V(H)\). In \(G\), there are edges \((v_{source},e)\) of capacity \((s-1)\) from the source to all \(e \in E(H)\), edges \((v, v_{sink})\) of capacity \(1\) from all \(v \in V(H)\), and finally, all edges \((e, v) \in E(H) \times V(H)\) with capacity \(1\) whenever \(v \in e\). 
	
	We claim that the value of the maximum flow of this network is \((s-1)|E(H)|\). To see this, let \( C = (S, V(G) \setminus S) \) be any cut in \(G\) such that \(v_{source} \in S, v_{sink} \notin S\). There exist \(\tilde{E} \subseteq E(H), \tilde{V} \subseteq V(H)\) such that \(S = \{v_{source}\} \cup \tilde{E} \cup \tilde{V}\) and we let \(N(\tilde{E})\) be the outneighbourhood of \(\tilde{E}\). By definition of \(G\), \(N( \tilde{E}) = \bigcup_{e \in \tilde{E}}{e}\) corresponds to the support of the edges of \(\tilde{E}\) in \(H\). The hypergraph \(\tilde{H} = ( N( \tilde{E} ), \tilde{E} )\) is a subgraph of \(H\), so by Claim~\ref{clm:excess_of_union} it is a collection of disjoint trees and unicycles and its excess is at most \(ex(\tilde{H}) \le 0\), which implies 
		\begin{align*}
			|N( \tilde{E} )| \ge (s-1)|\tilde{E}|. \label{eq:flow_hall_cond} \numberthis
		\end{align*} 
	
	The value of the cut \(C\) is the sum of capacities of edges \((v_{source}, e)\) when \(e \notin \tilde{E}\), edges \((v, v_{sink})\) when \(v \in \tilde{V}\) and edges \((e, v)\) when \(e \in \tilde{E}, v \notin \tilde{V}\). Using \eqref{eq:flow_hall_cond}, we find that the value of \(C\) is at least
		\begin{align*}
			(s-1)\left(|E(H)| - |\tilde{E}| \right) + |\tilde{V}| + \left( |N(\tilde{E})| - |\tilde{V}| \right) \ge (s-1)|E(H)|.
		\end{align*} 
	By max-flow min-cut Theorem, we conclude that the value of the maximum flow \(f\) is indeed \((s-1) |E(H)|\). For each \(e \in E(H)\), there exist unique \(v^e_1, \dots, v^e_{s-1} \in e\) such that \(f(e,v^e_1) = \cdots = f(e, v^e_{s-1}) = 1\). We let \(H'\) be the \((s-1)\)-graph on \(V(H)\) where the edge set is the collection of all those \(\left(\{v^e_1, \dots, v^e_{s-1}\}\right)_{e \in E(H)}\). It is straightforward to check that, as claimed above, the edges of \(H'\) are disjoint and \(H' \le H\).

\end{proof}

With this Lemma, we are now ready to prove Theorem~\ref{thm:threshold}~\ref{itm:thm_threshold_3}.

\begin{proof}[Proof of Theorem~\ref{thm:threshold}~\ref{itm:thm_threshold_3}]
	
By Theorem~\ref{thm:tree_unicycle_decomp} and Lemma~\ref{lem:Bwin_tree_unic}, there exists a constant \(c_1 > 0\) such that when \(p \le c_1 n^{1-s}\) Breaker wins w.h.p.

Suppose now that \(p \ge c_2 n^{1-s}\) for some constant \(c_2\) large enough. Because at each turn Maker picks \(m\) vertices and Breaker picks \(b\), at the end of the game Maker has taken at least \(t = \frac{mn}{m+b}\) vertices. We prove that all subsets of size \(t\) contain at least an edge, so that, no matter which strategy Maker follows, he wins w.h.p. 

Let \(T \subseteq V\) be a subset of \(|T| = t\) vertices. The probability that \(T\) contains no edge is \[\Pr[T \text{ contains no edge}] = (1-p)^{t \choose s} \le e^{- \frac{p t^s}{s!} (1 - o(1))}.\]
As \(t = \frac{mn}{m+b}\), we have \(t^s = t \cdot \left( \frac{m}{m+b} n \right)^{s-1} \). Also \(p = c_2 n^{1-s}\), so the expression above can be rewritten as 
	\[\Pr[T \text{ contains no edge}] \le e^{ - \left( \frac{c_2}{s!} (\frac{m}{m+b})^{s-1} - o(1) \right) t }.\]
There are \({n \choose t} \le \left( \frac{en}{t} \right)^t = \left( e\frac{m+b}{m} \right)^t\) subsets of \(t\) vertices, so the expected number of those which do not contain an edge is at most 
	\begin{align*}
		\left(e \frac{m+b}{m} \cdot e^{ - \left( \frac{c_2}{s!} (\frac{m}{m+b})^{s-1} - o(1) \right)} \right)^t,
	\end{align*} which, for a choice of \(c_2\) large enough, decays to \(0\) as \(n\) goes to infinity. We conclude using the first moment method.
\end{proof}

\section{Conclusion and Remarks}

\label{sec:conclusion}

In this paper, we have determined the threshold probability for which the \((m,b)\)-game on the random hypergraph \(H_{n,s,p}\) is won by Maker. When \(b < m\) or \(m \le s-1\), this threshold corresponds to the threshold at which some `small' game appears. In contrast, when \(m \le s-2, m \le b\), Maker winning on \(H_{n,s,p}\) is tied to global properties.

One may ask whether the threshold in that last case is a threshold at which some specific Maker-win \(W_n\) (possibly depending on \(n\)) starts appearing. The answer to this question is no. If Maker wins on \(W_n\), then some subgraph \(W'\) of it needs to have excess at least \(ex(W') \ge 1\), as otherwise Claim~\ref{clm:excess_of_union} guarantees that \(W_n\) is a collection of trees and unicycles, which by Lemma~\ref{lem:Bwin_tree_unic} is not a Maker-win. However, when \(p = \Theta(n^{1-s})\), such a \(W'\) has probability \(O\left( n^{|V(W')|} \cdot p^{|E(W')|} \right) = O \left( n^{- ex(W')} \right) = O(n^{-1})\) of being in \(H_{n,s,p}\).

One may also ask whether the weak thresholds of Theorem~\ref{thm:threshold}~\ref{itm:thm_threshold_1} and \ref{itm:thm_threshold_2} could be improved to sharp thresholds. Again, this is not the case: it is quite easy to see that when \(p = \Theta( p_0 )\), the probability that Maker wins is constant, bounded away from \(0\) and \(1\).

Finally, one may ask if the semi-sharp threshold of Theorem~\ref{thm:threshold}~\ref{itm:thm_threshold_3} can be improved to a sharp threshold. We conjecture that this is true.

\begin{conjecture}
	When \(m \le s-2, m \le b\), the threshold is sharp: there exists \(C > 0\) such that, for all \(\varepsilon > 0\), if \(p \le (C - \varepsilon)n^{1-s}\), Breaker wins w.h.p.\ and if \(p \ge (C + \varepsilon) n^{1-s}\), Maker wins w.h.p.
\end{conjecture}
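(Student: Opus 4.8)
\medskip
\noindent\emph{A possible approach.}
It is convenient to first reformulate the game. Since at the end of a play every vertex has been claimed, writing $M,B$ for the final sets of Maker and Breaker we have $e\subseteq M\iff e\cap B=\emptyset$, so Maker wins precisely when some edge avoids $B$; equivalently, \emph{Breaker wins if and only if the set $B$ he has claimed is a vertex cover of $H$}. As $m,b$ are constants and Maker moves first, $|B|=\frac{b}{m+b}n+O(1)$, so the game asks whether Breaker can interactively assemble a vertex cover of $H$ of this size, while Maker tries to grab vertices that no such cover can avoid --- essentially, vertices lying in every near-minimum cover of the dense part of $H$. We also record that $\mathcal M_n=\{H:\text{Maker wins the }(m,b)\text{-game on }H\}$ is monotone increasing --- adding an edge $e$ gives $H\cup\{e\}\le H$, so by Remark~\ref{rk:harder_means_harder} it can only help Maker --- and invariant under relabelling the vertices; by Theorem~\ref{thm:threshold}~\ref{itm:thm_threshold_3} its threshold is coarse and located at $\Theta(n^{1-s})=n^{-(s-1)+o(1)}$.

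For a monotone, vertex-symmetric property whose threshold tends to $0$ polynomially, Friedgut's sharp-threshold theorem (in its hypergraph form, together with Bourgain's refinement) reduces proving sharpness to excluding the following \emph{local} scenario: that at some $p=\Theta(n^{1-s})$, planting a copy of a fixed hypergraph $G^\ast$ on a uniformly random vertex set boosts $\Pr[H_{n,s,p}\cup G^\ast\in\mathcal M_n]$ by $\Omega(1)$. Any $G^\ast$ that could play this role must appear at this scale with probability bounded away from $0$ and $1$, which forces every component of $G^\ast$ to be a tree or a unicycle; in particular $G^\ast$ is itself Breaker-win by Lemma~\ref{lem:Bwin_tree_unic}. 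The plan is then to show such a $G^\ast$ is worthless to Maker: as already noted in Section~\ref{sec:conclusion}, whether Maker wins at this scale is not caused by any bounded sub-hypergraph but by the global structure of the unique complex component of $H_{n,s,p}$, and attaching $G^\ast$ perturbs that component in only $O(1)$ places, which --- both players' budgets and objectives being of size $\Theta(n)$ --- should not w.h.p.\ flip a Breaker-win into a Maker-win.

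Carrying this out rigorously requires a \emph{matching} pair of structural conditions for the game in the regime $m\le s-2$, $m\le b$: a sufficient condition for Breaker strengthening Lemma~\ref{lem:Bwin_tree_unic} (a natural candidate is to rerun its max-flow argument with source capacities $m+1$ in place of $s-1$, giving that Breaker wins whenever $H$ has no sub-hypergraph $\tilde H$ with $(m+1)|E(\tilde H)|>|V(\tilde H)|$, although this is not yet tight when $m=s-2$), and a sufficient condition for Maker asserting that he wins once the $2$-core of $H$ violates the same inequality by a linear amount. If the two conditions can be pushed to share a single critical threshold, then, since both are governed by $\Theta(n)$-sized parameters and hence insensitive to attaching a bounded $G^\ast$, the property $\mathcal M_n$ is squeezed between two $G^\ast$-insensitive monotone events with that common threshold, Friedgut's local alternative is ruled out, and the threshold is sharp. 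The same analysis would identify $C$, namely the density at which the relevant parameter of the giant component of $H_{n,Cn^{1-s}}$ --- concentrated by standard first/second moment arguments for sparse random hypergraphs --- crosses over; a short calculation shows this $C$ lies strictly above the threshold for the mere emergence of a complex component, since Breaker still wins on, say, a long loose cycle carrying a bounded number of extra edges.

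The crux, and the main obstacle, is the Maker side: converting a quantitative failure of the $(m+1)$-type expansion inequality in the $2$-core into an explicit winning strategy for a player who gains only $O(1)$ vertices per turn, with a threshold matching the Breaker-side bound. This does not follow from any flow or Hall-type argument and seems to require a genuinely game-theoretic ingredient --- an Erd\"os--Selfridge-style criterion \emph{for Maker} that remains effective when every edge has the fixed size $s$ and $m,b$ are constants, which is exactly the situation where, as observed in the introduction, the classical criteria are silent.
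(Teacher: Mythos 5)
This statement is a \emph{conjecture} in the paper: the authors do not prove it, and they explicitly remark that while their semi-sharpness argument is in the spirit of Friedgut's, ``the key argument which allows him to conclude to the sharpness does not translate to our setting'' and that new ideas seem to be required. Your proposal follows exactly the route the paper flags as blocked --- invoke the Friedgut/Bourgain coarse-threshold criterion, argue that any bounded booster $G^\ast$ must be a collection of trees and unicycles (this part matches the paper's own remark in Section~\ref{sec:conclusion}), and then rule it out --- and it stalls at the same place. So this is not a proof, and you say as much yourself in the final paragraph.

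To name the gaps concretely. First, the step ``attaching $G^\ast$ perturbs the complex component in only $O(1)$ places, which should not w.h.p.\ flip a Breaker-win into a Maker-win'' is precisely what must be proved and is not a soft statement: being Maker-win is a game-theoretic, non-local property, and in the neighbouring regime $m=s-1\le b$ planting a single bounded $d$-star \emph{does} flip the game (Theorem~\ref{thm:threshold}~\ref{itm:thm_threshold_2}), so insensitivity to bounded plants cannot be asserted on general grounds; it needs a structural characterisation of Maker-win at the critical density that nobody currently has. Second, the ``matching pair of structural conditions'' is asserted, not constructed: the Breaker side (a max-flow argument with capacities $m+1$) is plausible but, as you note, not tight for $m=s-2$, and the Maker side --- winning from a quantitative density surplus in the $2$-core with only $m=O(1)$ vertices per turn --- has no known proof technique; the classical Maker-side criteria are vacuous when $s$, $m$, $b$ are all constants, which is exactly the observation made in the introduction. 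Until that Maker-win criterion exists and its threshold is shown to coincide with the Breaker-side one, the squeeze argument has nothing to squeeze between, and the sharpness (and the value of $C$) remains open, as the paper states.
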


Friedgut~\cite{friedgut2005hunting} gives an overview of how one may approach such a problem. He illustrates by giving a proof that the threshold for hypergraph \(2\)-colourability is sharp. Although the arguments we use to prove the semi-sharpness of Maker-win are somewhat similar to those of Friedgut, the key argument which allows him to conclude to the sharpness does not translate to our setting. We believe that proving the sharpness of Maker-win property requires new ideas.

As a short and final concluding remark, we note that unlike most previous papers on Maker-Breaker games, we focused on rather sparse games with small edges. It would interesting to know how the threshold probability behaves if we allow \(s\) to grow with \(n\).

\bibliographystyle{abbrv}
\bibliography{random_mb_games}

\end{document}